\tikzset{
0edge/.append style={->,>=stealth', shorten <=3pt, shorten >=3pt},
1edge/.append style={->,>=stealth', dashed, shorten <=3pt, shorten >=3pt},
%edge/.append style={->,>=stealth', shorten <=3pt, shorten >=3pt},
every loop/.append style={min distance=12mm,shorten <=3pt, shorten >=3pt}
}
\def\th@plain{\slshape}
\newcommand{\Rbb}{\mathbb{R}}
\newcommand{\Zbb}{\mathbb{Z}}
\newcommand{\Acal}{\mathcal{A}}
\newcommand{\Gcal}{\mathcal{G}}
\newcommand{\Kcal}{\mathcal{K}}
\newcommand{\abf}{\mathbf{a}}
\newcommand{\bbf}{\mathbf{b}}
\newcommand{\p}{_{\ge0}}
\newcommand{\m}{^{-1}}
\newcommand{\ot}{\leftarrow}
\newcommand{\abs}[1]{\lvert#1\rvert}
\newcommand{\norm}[1]{\lVert#1\rVert}
\newcommand{\newword}[1]{\emph{#1}}
\newcommand{\angles}[1]{\langle #1 \rangle}
\newcommand{\set}[1]{\{ #1 \}}
\DeclareMathSymbol{\upharpoonright}{\mathrel}{AMSa}{"16}
\let\restriction\upharpoonright
\DeclareMathOperator{\GL}{GL}
\theoremstyle{plain}
\newtheorem{theorem}{Theorem}[section]
\newtheorem{lemma}[theorem]{Lemma}
\newtheorem{claim}[theorem]{Claim}
\theoremstyle{definition}
\newtheorem{definition}[theorem]{Definition}
\newtheorem{remark}[theorem]{Remark}
\newtheorem{example}[theorem]{Example}
\numberwithin{equation}{section}
\newcommand{\occurs}[3]{{}_#1 #2 {}_#3}
\begin{document}

\bibliographystyle{plain}

\sloppy

\title[Only one Farey map]{There is only one Farey map}

\author[G.~Panti]{Giovanni Panti}
\address{Department of Mathematics, Computer Science and Physics\\
University of Udine\\
via delle Scienze 206\\
33100 Udine, Italy}
\email{giovanni.panti@uniud.it}

\begin{abstract}
Let $A_0,A_1$ be nonnegative matrices in $\GL_{n+1}\Zbb$ such that the subsimplexes $A_0[\Delta],A_1[\Delta]$ split the standard unit $n$-dimensional simplex $\Delta$ in two. We prove that, for every $n=1,2,\ldots$ and up to the natural action of the symmetric group by conjugation, there are precisely three choices for the pair $(A_0,A_1)$ such that the resulting projective Iterated Function System is topologically contractive. In equivalent terms, in every dimension there exist precisely three continued fraction algorithms that assign distinct two-symbol expansions to distinct points. 
These expansions are induced by the Gauss-type map $G:\Delta\to\Delta$ with branches $A_0\m,A_1\m$, which is continuous in exactly one of these three cases, namely when it equals the Farey-M\"onkemeyer map.
\end{abstract}

\thanks{\emph{2020 Math.~Subj.~Class.}: 11J70, 37M25}

\maketitle

\section{Introduction and Preliminaries}\label{ref1}

Let $n=1,2,3,\ldots$, and let $\Delta=\set{x\in\Rbb\p^{n+1}:\sum x_i=1}$ be the standard $n$-dimensional simplex, with vertices $\set{e_0,\ldots,e_n}$. We denote the monoid of all nonnegative matrices in $\GL_{n+1}\Zbb$ by $\Sigma$; the group of invertible elements of $\Sigma$ is the symmetric group $S_{n+1}$ of permutation matrices.
Every $A\in\Sigma$ determines a continuous map of $\Delta$ into itself, still denoted by $A$, via $A(x)=Ax/\norm{Ax}_1$; the image $A[\Delta]$ is a \newword{unimodular simplex}.
In agreement with the numbering of vertices of $\Delta$, we number matrix rows and columns from $0$ to $n$.

Let $\Acal=\set{A_0,\ldots,A_{m-1}}$ be a nonempty finite subset of $\Sigma$ not containing invertible elements. Identifying matrices with maps as above, $\Acal$ is an \newword{Iterated Function System} (IFS) on $\Delta$; all IFS in this paper are of this form.
We let $a,b,\ldots$ vary in the alphabet $\set{0,\ldots,m-1}$ and let $v,w,\ldots$ (respectively $\abf,\bbf,\ldots$) stand for finite words (respectively infinite sequences) over that alphabet. We write $\abf\restriction t=a_0\ldots a_{t-1}$ for the $t$-long prefix of $\abf$, and let $A_{\abf\restriction t}$ and $\Delta_{\abf\restriction t}$ stand for the product $A_{a_0}\cdots A_{a_{t-1}}$ and for the unimodular simplex determined by that product, respectively.

We say that $\Acal$ is \newword{simplex-splitting} if $\Delta_0,\ldots,\Delta_{m-1}$ constitute a partition of $\Delta$, that is, $\Delta=\bigcup_a \Delta_a$ and the interiors of $\Delta_a$ and $\Delta_b$ do not intersect for $a\not=b$. Given $x\in\Delta$, let $a=a(x)$ be the least index such that $x\in\Delta_a$ (the choice of this specific  selection rule is irrelevant; we will readdress this issue in Remark~\ref{ref13}(5)). Then these data define the Gauss-type map $G:\Delta\to\Delta$ by $Gx=A_{a(x)}\m(x)$, and the sequence $a_0 a_1 \ldots$ given by $a_t=a(G^tx)$ is the \newword{digit expansion} of $x$.

\begin{definition}
The\label{ref3} IFS $\Acal$ is \newword{topologically contractive} if for every infinite sequence~$\abf$ the intersection $\bigcap_{t\ge0}\Delta_{\abf\restriction t}$ is a singleton.
\end{definition}

\begin{remark}\label{ref13}.
\begin{enumerate}
\item In the classical setting of IFS, one usually requires \newword{uniform contractivity}, that is, the existence of a number $r\in(0,1)$ and a metric $d$ such that, for every $A\in\Acal$ and every pair of points $x,y$, we have $d\bigl(A(x),A(y)\bigr)\le r\,d(x,y)$. Uniform contractivity is strictly stronger than topological contractivity. Since we never use uniform contractivity, for simplicity we drop the adjective ``topological''. In the literature on multidimensional continued fractions a related concept is \newword{topological convergence}, which means contractivity \emph{along sequences that code a $G$-orbit}~\cite[Definition~9]{schweiger00}.
\item Contractivity implies that the function $\pi:m^\omega\to\Delta$ that maps $\abf$ to the only element of the singleton in Definition~\ref{ref3} is well defined; clearly, $\pi$ is then continuous.
\item In dimension $n=1$ every $\Acal$ is contractive; see Theorem~\ref{ref4}.
\item If $\Acal$ is simplex-splitting and contractive, then 
every point of~$\Delta$ has an expansion, and
distinct points have distinct expansions.
\item If $\Acal$ is simplex-splitting then the matrices in $\Acal$ constitute a \newword{code} inside $\Sigma$ (that is, they generate a free submonoid); this follows from the Ping-Pong Lemma~\cite[VII.A.2]{delaharpe00}, \cite[Lemma~2.5]{panti24}. If, moreover, $\Acal$ is contractive, then in the definition of $G$ we may forget about the selection rule and treat $G$ as a multivalued map, sending $x$ to the set $\set{A_a\m(x):x \in \Delta_a}$. This is safe because~\cite[Theorem~2.13]{panti24} (actually, a simplified version of it for ordinary IFS, not graph-directed ones) guarantees that the number of images remains uniformly bounded: that is, there exists a bound $M$ (depending on $\Acal$), such that for every $x$ and every~$t$ the cardinality of set of images of $x$ at time $t$ is bounded by $M$. Moreover, if $x$ is periodic (that is, $x$ belongs to the set of its own images at some time $t>0$), then $G$ is an ordinary single-valued map along the orbit of $x$.
\item Contractivity also implies that the \newword{Hutchinson operator} $\Acal(K)=\bigcup_a A_a[K]$ on the compact space $\Kcal$ of nonempty compact subsets of $\Delta$ is a strict contraction with respect to the Hausdorff metric 
\[
d(J,K)=\inf\set{\varepsilon>0:J\subseteq K_{<\varepsilon}\text{ and }
K\subseteq J_{<\varepsilon}},
\]
where $K_{<\varepsilon}$ is the union of all open balls of radius $\varepsilon$ and center a point of~$K$.
Therefore the Hutchinson operator has a unique fixed point, namely $\pi[m^\omega]$, and this fixed point is attractive; see the discussion in~\cite[\S1]{banakh_et_al15}.
\end{enumerate}
\end{remark}

\begin{example}
Let\label{ref5} $\Acal$ be the set of the following three matrices:
\[
A_0=\begin{pmatrix}
1 & &\\
1 & 1 & \\
1 & & 1
\end{pmatrix},\quad
A_1=\begin{pmatrix}
1 &1 &\\
 & 1 & \\
1 &1 & 1
\end{pmatrix},\quad
A_2=\begin{pmatrix}
1 & & 1\\
 & 1 & 1\\
 & & 1
\end{pmatrix}.
\]
Then $\Acal$ is simplex-splitting; Figure~\ref{fig1} left shows the time-$5$ partition $\set{\Delta_w:\abs{w}=5}$.
\begin{figure}[ht]
\includegraphics[width=6.2cm]{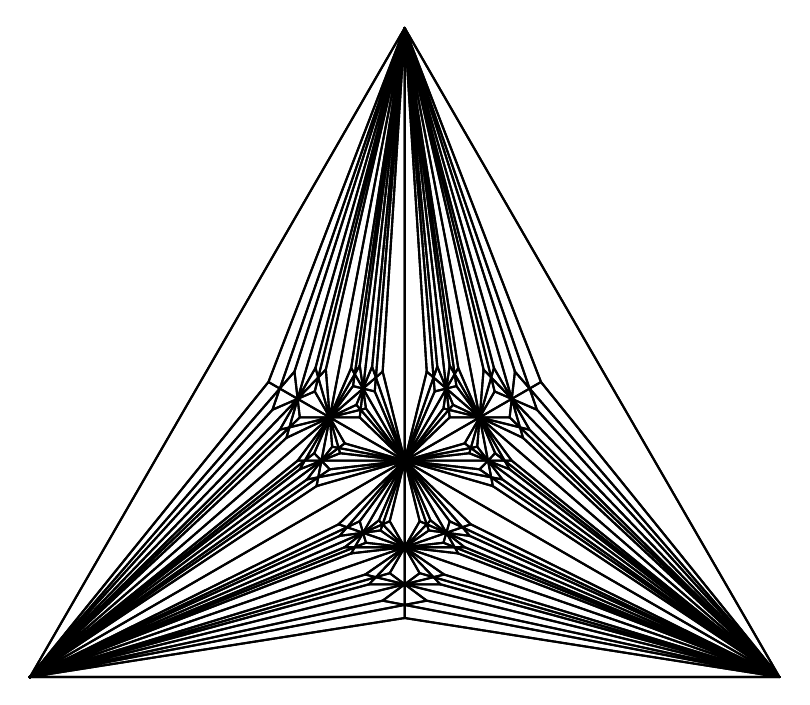}
\hspace{0.0cm}
\includegraphics[width=6.2cm]{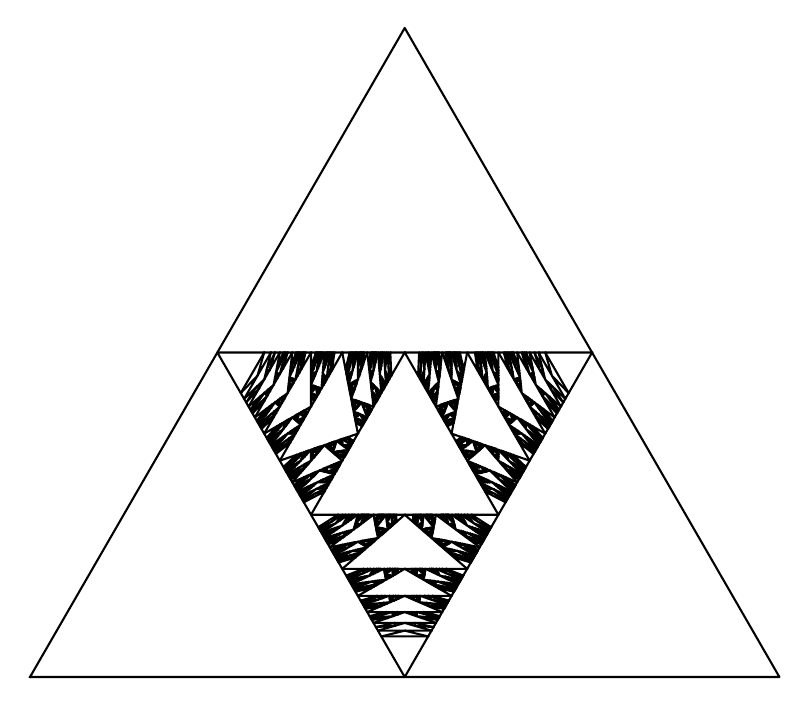}
\caption{The time-$5$ partition and the least fixed point $R$}
\label{fig1}
\end{figure}

The Hutchinson operator has a largest fixed point, namely $\Delta$ itself, and a smallest one $R$, which is the projection to $\Delta$ of the closure of the set of the attracting fixed points in projective $2$-space of the proximal elements in the monoid generated by $A_0,A_1,A_2$~\cite[\S3.1]{benoist-quint16}. It is easy to show that $R$ is the image of the Rauzy gasket~\cite{avila-hubert-skripchenko16}, \cite{jurgarauzy} under an appropriate projective map. There are uncountably many incomparable fixed points between $\Delta$ and $R$; for example, for every $x$ in the boundary of $\Delta$ the sequence $\Acal^t\set{x}$ converges to the closure of $\set{A_w(x):\text{$w$ is a word in $\set{0,1,2}$}}$, which is a fixed point intersecting the boundary of $\Delta$ in $x$ only.

The map $G$ acts on $x=(x_0,x_1,x_2)$ by subtracting the smallest coordinate from the other coordinates and projecting back to $\Delta$. Ambiguities arise when the smallest coordinate is not unique; they are irrelevant but for the case of the vertices, that in the absence of a selection rule would have uncountably many digit expansions.
Contractivity is violated uncountably many times, for example at any sequence with a tail avoiding precisely one letter. Take, for example, $\abf=(01)^\omega$;
by looking at the right eigenspaces of $A_{01}$ one readily sees that $\Delta_{\abf\restriction t}$ converges to the line segment
of vertices $e_2$ and
 $(-\tau/2+1,\tau/2-1/2,1/2)$, with $\tau$ the golden ratio. Therefore, all points in that line segment have $\abf$ as their digit expansion. This way of violating contractivity will reappear as a special case of Lemma~\ref{ref12}.
\end{example}

In dimension greater than $1$ simplex-splitting contractive algorithms are scarce; indeed, some experimenting shows that a ``random'' choice of a splitting of $\Delta$ has little probability of being contractive, even if one takes care that the sequences~$\Delta_{a^t}$ determined by the generators always
shrink to a point.
Our main result, Theorem~\ref{ref6}, shows an extreme case of this scarcity: \emph{regardless of the dimension, and modulo the obvious symmetries of the unit simplex, there are only three contractive simplex-splitting IFS on two maps}.

We conclude this preliminary section by showing, on the positive side, that the arithmetic of $\Sigma$ always forces convergence to lower-dimensional simplexes.

\begin{theorem}
Let\label{ref4} $\Delta=K_0\supseteq K_1\supseteq K_2\supseteq\cdots$ be any descending chain of simplexes; then the intersection $K=\bigcap_t K_t$ is a simplex. If, in addition, each $K_t$ is unimodular and the chain is strictly decreasing infinitely often, then $K$ has dimension strictly less than $n$.
\end{theorem}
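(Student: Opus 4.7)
My plan handles the two parts of the theorem separately.

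For the first part, I first observe that the dimensions $\dim K_t$ form a nonincreasing sequence of integers in $\{0,\ldots,n\}$ and therefore stabilize at some $d$; discarding initial terms, I may assume every $K_t$ is a $d$-simplex with vertices $v_0^{(t)},\ldots,v_d^{(t)}\in\Delta$. By compactness of $\Delta^{d+1}$, I pass to a subsequence $(t_j)$ along which every vertex trajectory converges, $v_i^{(t_j)}\to w_i\in\Delta$. Vertex-wise convergence implies Hausdorff convergence $K_{t_j}\to\mathrm{conv}(w_0,\ldots,w_d)$, and for a nested sequence the Hausdorff limit of any cofinal subsequence coincides with the full intersection. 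Hence $K=\mathrm{conv}(w_0,\ldots,w_d)$, a convex hull of at most $d+1$ points.

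The remaining step in Part~1 --- which I expect to be the main obstacle --- is verifying that this convex hull is in fact a simplex (possibly of dimension less than $d$) rather than a polytope with more extreme points than its dimension plus one (e.g., a planar quadrilateral arising as a Hausdorff limit of flattening tetrahedra). Such ``bad'' limits \emph{can} arise for unconstrained sequences of simplexes, so the proof must exploit the nested structure: every vertex $v_i^{(t+1)}$ has nonnegative barycentric coordinates relative to $v_0^{(t)},\ldots,v_d^{(t)}$, and propagating this coherence to the limit should force each $w_i$ to be either affinely independent of the others or a convex combination of them (in which case it can be dropped from the defining list without changing $K$). An explicit test --- attempting to construct nested tetrahedra in $\Rbb^3$ that Hausdorff-converge to a planar quadrilateral --- shows the obstruction concretely: the nested inclusions force the ``horizontal'' extent to shrink in step with the ``vertical'' flattening, so the limit degenerates to a triangle or smaller.

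For the second part, unimodularity gives a clean volume argument. Writing $K_t=A_t[\Delta]$ with $A_t\in\Sigma$, the inclusion $K_{t+1}\subseteq K_t$ forces $M_t:=A_t\m A_{t+1}$ to lie in $\Sigma$ (its columns are barycentric images of vertices and so have nonnegative entries), and $K_{t+1}\subsetneq K_t$ precisely when $M_t$ is not a permutation matrix. A direct computation yields $\mathrm{Vol}(K_t)/\mathrm{Vol}(\Delta)=1/\prod_i\norm{c_i^{(t)}}_1$, where $c_i^{(t)}$ is the $i$th column of $A_t$ and $\prod_i\norm{c_i^{(t)}}_1\in\Zbb_{>0}$. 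Expanding $\prod_j\norm{c_j^{(t+1)}}_1=\prod_j\bigl(\sum_i(M_t)_{ij}\norm{c_i^{(t)}}_1\bigr)$ as a sum over functions $f\colon\{0,\ldots,n\}\to\{0,\ldots,n\}$, the permutation terms contribute $\mathrm{perm}(M_t)\prod_i\norm{c_i^{(t)}}_1\ge\prod_i\norm{c_i^{(t)}}_1$ (since $\mathrm{perm}(M_t)\ge|\det M_t|=1$), and every non-permutation term is nonnegative. When $M_t$ is not a permutation, some column has at least two nonzero entries, and one can build a non-permutation $f$ with $\prod_j(M_t)_{f(j),j}>0$, adding a further positive integer contribution. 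Thus $\prod_i\norm{c_i^{(t)}}_1$ strictly increases by an integer at every non-permutation step; infinitely many such steps drive this product to $\infty$, so $\mathrm{Vol}(K_t)\to 0$, $\mathrm{Vol}(K)=0$, and $\dim K<n$.
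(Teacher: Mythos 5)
Your strategy coincides with the paper's on both halves: for the first claim, compactness of the vertex configurations plus continuity of the convex-hull operation identifies $K$ with $\mathrm{conv}(w_0,\ldots,w_d)$; for the second, the exact volume formula $\lambda(K_t)=\bigl(\prod_j\norm{c_j^{(t)}}_1\bigr)^{-1}$ together with integrality forces $\lambda(K_t)\to0$. Your Part~2 is complete and is actually more detailed than the paper, which imports the volume formula from a reference and simply asserts the strict decrease at each non-permutation step; your observation that $A_t\m A_{t+1}\in\Sigma$ and your permanent expansion $\prod_j\norm{c_j^{(t+1)}}_1=\sum_f\prod_j(M_t)_{f(j),j}\norm{c_{f(j)}^{(t)}}_1\ge \operatorname{perm}(M_t)\prod_i\norm{c_i^{(t)}}_1$, with an extra positive non-permutation term whenever $M_t$ is not a permutation, verify that assertion cleanly.

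The genuine gap is the one you flag yourself in Part~1: you never prove that $\mathrm{conv}(w_0,\ldots,w_d)$ is a simplex. ``Propagating this coherence to the limit should force\ldots'' followed by one worked example of nested tetrahedra is a plan plus a sanity check, not an argument; as you correctly observe, without nestedness the limit of $d$-simplexes genuinely can be, say, a planar quadrilateral, so the nested structure must enter the proof at precisely the point where your write-up stops. To be fair, the paper is equally light here --- it writes that the convex hull of the accumulation set $V$, ``which is a simplex,'' equals $K$, with no further justification --- so you have at least isolated the one step that needs real work. To close it you must turn your heuristic into a proof: for instance, assume the extreme points among the $w_i$ admit a Radon partition $\sum_{i\in P}\alpha_iw_i=\sum_{i\in N}\beta_iw_i$ with $P\cap N=\emptyset$, express each $w_i$ through its (unique) barycentric coordinates relative to a fixed earlier simplex $K_s$, and derive a contradiction from the fact that the slice of $K_s$ by the affine hull of $K$ must Hausdorff-converge to $K$ while having too few vertices in the relevant faces; alternatively, invoke the classical fact that a decreasing intersection of simplexes is a (finite-dimensional Choquet, hence ordinary) simplex. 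Until one such argument is supplied, Part~1 is incomplete.
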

\begin{proof}
It is easy to show directly (see the proof of~\cite[Lemma A.2.3]{bishop-peres17} for a more general statement) that $K$ is the limit of the sequence $K_t$ in the Hausdorff metric.
Let $\Kcal'$ be the subspace of $\Kcal$ whose elements are all nonempty subsets of $\Delta$ of cardinality at most $n+1$. Writing $V_t$ for the set of vertices of $K_t$, the sequence $V_0,V_1,\ldots$ has an accumulation point $V=\set{x_1,\ldots,x_k}$ in $\Kcal'$, because the latter is compact (it is the quotient of the product of $n+1$ copies of $\Delta$ under the action of the symmetric group, see~\cite[p.~877]{borsuk-ulam31}).
Now, the map that sends an element of $\Kcal$ to its convex hull is Lipschitz~\cite[p.~184]{federer69}, thus continuous. This implies that the convex hull of $V$, which is a simplex, equals $K$, thus establishing our first claim.

Assume now that every $K_t$ is unimodular and that, without loss of generality, strictly contains $K_{t+1}$. This implies that there exists a sequence $A_1,A_2,\ldots$ of matrices in $\Sigma$, none of them a permutation matrix, such that $K_t=A_1A_2\cdots A_t[\Delta]$.
Letting $\lambda$ denote Lebesgue probability on $\Delta$, we conclude the proof by showing that $\lambda(K)=0$, so that $K$ is not $n$-dimensional. Consider the row vector $(1\ldots1)A_1\cdots A_t=\bigl(l_0(t)\ldots l_n(t)\bigr)$; it is known~\cite[p.~388]{nogueira95} that $\lambda(K_t)=\bigl(l_0(t)\cdots l_n(t)\bigr)\m$, thus the reciprocal of an integer. Since no matrix in the sequence is a permutation matrix, $\lambda(K_{t+1})$ is strictly less than $\lambda(K_t)$, and therefore is the reciprocal of a strictly greater integer. This holds for every $t$, and hence $\lambda(K)=\lim_t\lambda(K_t)=0$.
\end{proof}

\section{Coding by two symbols}\label{ref7}

From now on we will consider only simplex-splitting algorithms on two symbols, $\Acal=\set{A_0,A_1}$. We first discuss the number of these IFS modulo the natural symmetries of $\Delta$; this extends the discussion in~\cite{garrity-osterman240000}, that treats the $2$-dimensional case.
We need to define a few elementary matrices in $\Sigma$:
\begin{itemize}
\item $N$ is the $(n+1)\times(n+1)$ elementary matrix obtained from the identity matrix $I$ by summing the first column to the second;
\item $F$ is obtained from $I$ by exchanging the first two columns;
\item $R$ is obtained from $I$ by shifting cyclically each column to the previous one, while moving the first to the last;
\item $L=FNF$.
\end{itemize}

Obviously the only way of splitting $\Delta$ in two unimodular subsimplexes is by choosing an edge $\angles{e_i,e_j}$ and taking as first subsimplex the one with vertices $\set{(e_i+e_j)/2}\cup\set{e_k:k\not=j}$, and as second the one with vertices $\set{(e_i+e_j)/2}\cup\set{e_k:k\not=i}$. Without loss of generality, that is up to the group $S_{n+1}=\angles{F,R}\subset\Sigma$ of symmetries of $\Delta$, we choose the edge $\angles{e_0,e_1}$, so that the first subsimplex has vertices the columns of $N$, and the second those of $L$.
We agree that the matrix $A_0$ maps $\Delta$ to the first subsimplex, and $A_1$ to the second. Thus, from now on our IFS are given by \emph{ordered} pairs $(A_0,A_1)$ of the form $(NP_0,LP_1)$, for arbitrary pairs $(P_0,P_1)\in S_{n+1}^2$. Two of these pairs must be identified if conjugated by an element of the subgroup $S_{n-1}$ of $S_{n+1}$ whose elements $H$ fix
both $e_0$ and $e_1$, and also if conjugated by $F$ provided that, in this second case, we exchange $A_0$ with $A_1$. Note that conjugation by elements of $S_{n-1}$ fixes both $N$ and $L$, while conjugation by $F$ exchanges them. Letting $S_2=\set{I,F}$, this means that we must count the number of orbits of pairs $(P_0,P_1)\in S_{n+1}^2$ under the action of $S_2\times S_{n-1}$ given by
\begin{equation}\label{eq1}
\begin{split}
(I,H)*(P_0,P_1)&=(HP_0H\m,HP_1H\m),\\
(F,I)*(P_0,P_1)&=(FP_1F\m,FP_0F\m).
\end{split}
\end{equation}

For $n=1,2$ the counting can be done by hand. There are three orbits for $n=1$, namely $\bigl\{(I,I)\bigr\}$, $\bigl\{(I,F),(F,I)\bigr\}$, $\bigl\{(F,F)\bigr\}$, and $21$ orbits for $n=2$, which have been computed in~\cite[\S7]{garrity-osterman240000}.
For larger $n$ the number of orbits grows fast; we do not know of a closed formula for this number as a function of $n$, nor expect one. However, the following snippet of SageMath code counts orbits for small values of~$n$; for $n=3,4,5$ there are $160$, $1283$, $11321$ orbits, respectively.
\begin{verbatim}
n=5
S=SymmetricGroup([0..n])
f=S("(0,1)")
if n<3:
    H=S.subgroup([S.identity()])
else:
    H=S.subgroup([S("(2,3)"),S(str(tuple([2..n])))])
def action(x,y):
    Hxy=[(h*x*h^-1,h*y*h^-1) for h in H]
    return Set(Hxy+[(f*o[1]*f^-1,f*o[0]*f^-1) for o in Hxy])
orbits=Set([action(p0,p1) for p0 in S for p1 in S])
print(len(orbits))
\end{verbatim}

In~\cite[Satz~10]{monkemeyer54} M\"onkemeyer proved ---using, of course, a different language--- that the IFS $(M_0,M_1)$, where $M_0=NFR$ and $M_1=LR$, is contractive in every dimension~$n$; a more modern proof is in~\cite[Lemma~19]{schweiger00}. M\"onkemeyer's algorithm has been rediscovered several times, notably by Selmer~\cite{selmer61} and Baldwin~\cite{baldwin92a}: see the discussion at the end of~\cite[\S1]{panti08}. In dimension~$1$, and identifying $\Delta$ with the real unit interval $[0,1]$ via $(x,1-x)\mapsto 1-x$, it reduces to the classical Farey algorithm~\cite{ito89}, \cite{kessebohmerstratmann07}, \cite{isola11}, \cite{heersink16}. The following is our main result.

\begin{theorem}
For\label{ref6} every $n=1,2,3,\ldots$, and up to the natural action of the symmetric group $S_{n+1}$, there are precisely three
algorithms that split the unit $n$-dimensional simplex $\Delta$ in two, and are contractive.
\begin{enumerate}
\item The M\"onkemeyer algorithm, induced by $(M_0,M_1)$; this is the only case in which the associated Gauss-type map $G$ is continuous. If $n$ is odd, $G$ is orientation-preserving on $\Delta_0$ and orientation-reversing on $\Delta_1$; this behavior is reversed for even $n$, and also reversed by replacing $(M_0,M_1)$ with the equivalent pair $(FM_1F\m,FM_0F\m)$.
\item The algorithm induced by $(M_0,M_1F)$, whose map $G$ is orientation-preserving for odd $n$ and orientation-reversing for even $n$.
\item The algorithm induced by $(M_0F,M_1)$, whose map $G$ is orientation-reversing for odd $n$ and orientation-preserving for even $n$.
\end{enumerate}
\end{theorem}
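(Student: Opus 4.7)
The plan is to split the theorem into four essentially independent verifications: contractivity of the three listed IFS, non-contractivity of every other $(S_2 \times S_{n-1})$-orbit of pairs $(P_0,P_1)$, identification of the unique continuous case, and the orientation statements. The parameterization $(A_0,A_1)=(NP_0,LP_1)$ with action (\ref{eq1}) is already in place.

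For \emph{contractivity of the three candidates} I would exploit the identity $FM_0=M_1$ (equivalently $FM_1=M_0$), which is immediate from $FNF=L$. The four pairs $(M_0Q_0,M_1Q_1)$ with $Q_0,Q_1\in\{I,F\}$ represent all three candidate orbits under (\ref{eq1}), since applying $(F,I)$ to the M\"onkemeyer pair yields $(FM_1F,FM_0F)=(M_0F,M_1F)$ while the other two candidates are each fixed by the $(F,I)$-action. Using $FM_a=M_{1-a}$ to push every $Q_i=F$ leftward through the neighbouring $M$, any finite product $A_{a_0}\cdots A_{a_{t-1}}$ in such an IFS rewrites as $M_{a'_0}\cdots M_{a'_{t-1}}$ for a word $\abf'$ obtained from $\abf$ by an explicit bijective transformation of $\{0,1\}^t$ (for instance, for $(M_0,M_1F)$: $a'_0=a_0$ and $a'_i=a_i\oplus[a_{i-1}=1]$). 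Hence the four IFS induce the same depth-$t$ partition of $\Delta$ for every $t$, and contractivity of each is equivalent to contractivity of the M\"onkemeyer pair, which is the classical result of~\cite{monkemeyer54, schweiger00}.

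The heart of the proof is then ruling out every other orbit. I would begin with the constant sequence $0^\omega$: direct inspection of $N$ shows the directed graph of $A_0=NP_0$ has an edge $j\to\sigma_0(j)$ for every $j$ plus an extra edge $\sigma_0^{-1}(1)\to 0$. By Perron--Frobenius, $\bigcap_t A_0^t[\Delta]$ contains one Perron eigenvector per recurrent strongly connected component of this digraph, so contractivity along $0^\omega$ forces a single recurrent component. This leaves exactly two shapes for $\sigma_0$: either $\sigma_0(0)=0$ and $\sigma_0\restriction\{1,\ldots,n\}$ is an $n$-cycle, or $\sigma_0$ is a single $(n{+}1)$-cycle through $0$ and $1$. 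The mirror analysis at $1^\omega$ yields the analogous list for $\sigma_1$. Modulo $S_{n-1}$-conjugation this leaves $n+1$ options for each of $\sigma_0,\sigma_1$, so only $O(n^2)$ candidate pairs survive. To eliminate all but the three listed orbits I would invoke Lemma~\ref{ref12} on short-period products such as $A_0A_1$, $A_0^2A_1$, and $A_0A_1^2$: whenever $(P_0,P_1)$ is not $S_{n-1}$-conjugate to one of $(FR,R)$, $(FR,RF)$, $(FRF,R)$, I expect one of these products to have a non-trivial invariant subsimplex of the type displayed in Example~\ref{ref5}. The main obstacle is making this elimination uniform in $n$, since the ambient orbit count under $S_2\times S_{n-1}$ grows factorially ($3,21,160,1283,11321$ for $n=1,\ldots,5$), and the argument will have to rely on a small family of generic obstructions parameterized by the joint cycle shapes of $(\sigma_0,\sigma_1)$ rather than brute case analysis.

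For \emph{continuity}, a direct computation shows that $A_0^{-1}=P_0^{-1}N^{-1}$ and $A_1^{-1}=P_1^{-1}L^{-1}$ act on the shared face $B=\Delta_0\cap\Delta_1=\mathrm{conv}\{(e_0+e_1)/2,e_2,\ldots,e_n\}$ by sending $(e_0+e_1)/2$ to $e_{\sigma_0^{-1}(1)}$ resp.\ $e_{\sigma_1^{-1}(0)}$ and each $e_i$ to $e_{\sigma_0^{-1}(i)}$ resp.\ $e_{\sigma_1^{-1}(i)}$ for $i\ge 2$. Equating these maps forces $\sigma_1^{-1}=\sigma_0^{-1}\circ(0\,1)$, i.e.\ $P_1=FP_0$, which singles out M\"onkemeyer among the three. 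For \emph{orientation}, the identities $\det N=1$, $\det F=-1$, $\det R=(-1)^n$ give $\det M_0=(-1)^{n+1}$ and $\det M_1=(-1)^n$; since $G|_{\Delta_a}=A_a^{-1}$ has orientation sign $\det A_a$, the parity assertions of (1)--(3) and the effect of a trailing $F$ follow at once.
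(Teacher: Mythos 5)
Your treatment of the peripheral claims is correct and essentially matches the paper's: contractivity of the three candidates via $FM_0=M_1$ and invariance of the time-$t$ partition, the orientation signs via determinants, and the continuity characterization (the paper rules out continuity in cases (2),(3) by an orientation/opposite-sides argument rather than by solving $P_1=FP_0$ on the shared face, but your computation is fine and in fact slightly more informative). The genuine gap is in the heart of the theorem: the elimination of every other orbit is only sketched, and you say so yourself (``I expect\ldots'', ``the main obstacle is making this elimination uniform in $n$''). Moreover your estimate of what remains after the $0^\omega$/$1^\omega$ normalization is wrong: the $S_{n-1}$-conjugation freedom is spent normalizing the $0$-chain to $1\ot 2\ot\cdots\ot n$, after which the $1$-chain still visits $\{2,\ldots,n\}$ in an arbitrary order $k_2,\ldots,k_n$, so on the order of $n\cdot(n-1)!$ candidates survive, not $O(n^2)$. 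Your proposed tool---applying Lemma~\ref{ref12} to a fixed finite list of short products $A_0A_1$, $A_0^2A_1$, $A_0A_1^2$---cannot dispose of these uniformly in $n$: the obstructions actually needed involve words such as $0^s1^t$ versus $1^u0^r$ whose exponents depend on positions along the chains and are therefore unboundedly long.

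The paper closes this gap with two combinatorial exclusion principles derived from Lemma~\ref{ref12} (Lemma~\ref{ref14}: no two loops for the same word at distinct nodes, and no two loops for incomparable words at one node), and then uses them to (a) pin down the position of the node $0$ inside the hyperbolic $0$-chain as second-to-last (Lemma~\ref{ref15}, whose hardest subcase additionally requires a direct Perron--Frobenius computation showing that $A_{01}$ has a dominant eigenvalue greater than $1$ while fixing a vertex---a situation not covered by Lemma~\ref{ref12}), and (b) show that the $1$-chain admits no ``backtracks'', which forces $k_i=i$ and hence equivalence to one of the three M\"onkemeyer pairs. None of this machinery appears in your proposal. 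As written, you establish that the three listed algorithms are contractive and distinguish them from one another, but not that they are the only contractive simplex-splitting IFS on two maps, which is the substance of the theorem.
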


We remark that the orientation-preserving algorithm is the one used in~\cite{fougeron-skripchenko21} and, in an accelerated form, in~\cite{bruin-troubetzkoy03}, while the orientation-reversing one is the Cassaigne algorithm in~\cite[\S8]{fogg-nous24}. The rest of this paper is devoted to the proof of Theorem~\ref{ref6}.

\section{Proof of Theorem~\ref{ref6}}\label{ref8}

The orientation preserving/reversing claims amount to the fact that the matrices determining the branches of $G$ have positive/negative determinant. They are easily checked by noting that $\det N=\det L=1$, $\det F=-1$, and $\det R=(-1)^n$. In Case (1) the map $G$ is continuous because, for every $i\not=0$, we have $M_0e_i=M_1e_i$, 
and $\Delta_0\cap\Delta_1$ is the common $M_0$-image and $M_1$-image of the face of $\Delta$ with vertices $e_1,\ldots,e_n$.
In Cases~(2) and~(3), if $G$ were continuous then, since its two branches are coherent with respect to orientation, the images $G[\Delta_0]$ and $G[\Delta_1]$ would lie on opposite sides of some $(n-1)$-dimensional face of $\Delta$, which is impossible.

We have $FM_0=M_1$ and $FM_1=M_0$.
Therefore, every product of $t$ matrices from $(M_0,M_1F)$, or from $(M_0F,M_1)$, equals a product of $t$ matrices from $(M_0,M_1)$, possibly followed by a final $F$. Since $F[\Delta]=\Delta$, this implies that:
\begin{itemize}
\item[(i)] The time-$t$ partition $\set{\Delta_w:\abs{w}=t}$ is unchanged by taking $(A_0,A_1)$ to be either $(M_0,M_1)$, or $(M_0,M_1F)$, or $(M_0F,M_1)$.
\item[(ii)] The contractivity of the M\"onkemeyer algorithm quoted above extends to its orientation-preserving and orientation-reversing versions.
\end{itemize}

Looking back at the properties of the action~\eqref{eq1}, the proof of Theorem~\ref{ref6} reduces then to establishing the following claim.

\begin{claim}
Let\label{ref10} $P_0,P_1\in S_{n+1}$ be such that $(A_0,A_1)=(NP_0,LP_1)$ is contractive. Then, under the action of $S_2\times S_{n-1}$ given by
\begin{equation}\label{eq2}
\begin{split}
(I,H)*(A_0,A_1)&=(HA_0H\m,HA_1H\m),\\
(F,I)*(A_0,A_1)&=(FA_1F\m,FA_0F\m).
\end{split}
\end{equation}
the pair $(A_0,A_1)$ is equivalent to precisely one of $(M_0,M_1)$, $(M_0,M_1F)$, $(M_0F,M_1)$ ($(M_0F,M_1F)$ is equivalent to $(M_0,M_1)$, since they are mapped to each other by $(F,I)$).
\end{claim}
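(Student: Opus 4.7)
The plan is to recast the claim combinatorially as a question about pairs of permutations, extract rigid structural constraints from contractivity along single-symbol periodic sequences, and then eliminate the remaining cases using mixed periodic sequences.

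\textbf{Combinatorial encoding.} Writing $\sigma_i=P_i$ as a permutation of $\{0,\ldots,n\}$, the vertex action is $A_0(e_k)=Ne_{\sigma_0(k)}$, which equals $e_{\sigma_0(k)}$ unless $\sigma_0(k)=1$, in which case it is the shared midpoint $m:=(e_0+e_1)/2$. Setting $i_0:=\sigma_0^{-1}(1)$ for the unique critical index that $A_0$ collapses to $m$, and symmetrically $j_1:=\sigma_1^{-1}(0)$ for $A_1$, each pair $(A_0,A_1)$ is encoded by $(\sigma_0,\sigma_1)\in S_{n+1}^2$. The action \eqref{eq2} either conjugates both permutations by a common $H\in S_{n-1}$ (simultaneously permuting the indices $\{2,\ldots,n\}$) or swaps the branches and conjugates by $F$ (which exchanges the roles of $0$ and $1$, hence also those of $i_0$ and $j_1$).

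\textbf{Rigidity from single-branch contractivity.} The key observation is: any cycle $C$ of $\sigma_0$ not containing the index $1$ is automatically disjoint from $\{i_0\}$, and since $N$ acts as the identity on each $e_k$ with $k\neq 1$, the map $A_0$ permutes the vertices of the face $F_C:=\mathrm{conv}\{e_k:k\in C\}$ as $\sigma_0|_C$, so $A_0[F_C]=F_C$ setwise. If $|C|\ge 2$, this forces $F_C\subseteq\bigcap_t\Delta_{0^t}$, violating contractivity along $0^\omega$; if instead $\sigma_0$ has two singleton cycles $\{k_1\},\{k_2\}$ with $k_1,k_2\neq 1$, then both vertices are $A_0$-fixed and the edge $\langle e_{k_1},e_{k_2}\rangle$ is pointwise $A_0$-fixed, again contradicting contractivity. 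Hence $\sigma_0$ has exactly one cycle of length $\ge 2$ (containing both $1$ and $i_0$), plus at most one fixed point; an analogous statement holds for $\sigma_1$. In particular $i_0=1$ is ruled out, as it would force $\sigma_0$ to fix every other index and produce a fixed face of codimension $1$.

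\textbf{Cross-compatibility and enumeration.} Using $S_{n-1}$ to normalize (typically setting $i_0=n$), the residual candidate list for $\sigma_0$ is short: an $(n+1)$-cycle parameterized by the cyclic distance from $0$ to $1$, or an $n$-cycle with a single fixed point. Not every such $\sigma_0$ gives a contractive $A_0^\omega$, and even jointly contractive individual choices may fail to be compatible along mixed periodic words. The clean obstruction here is provided by Lemma~\ref{ref12}, of which Example~\ref{ref5} is a special case: the existence of a finite word $v$ for which $A_v$ preserves a positive-dimensional subsimplex of $\Delta$, or equivalently has a non-simple dominant eigenvalue whose eigenspace meets $\Delta$ non-trivially, rules out contractivity along $v^\omega$. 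Testing $v=01$ against the residual candidates should force a coherent-shift compatibility between $\sigma_0$ and $\sigma_1$, pinning $(i_0,j_1)$ to $(n,n)$ after normalization and reducing $\sigma_0,\sigma_1$ to those cycles appearing in the M\"onkemeyer pair and its $F$-twists; the four admissible pairs $(M_0,M_1), (M_0,M_1F), (M_0F,M_1), (M_0F,M_1F)$ then collapse to three orbits, as the first and last are related by $(F,I)$.

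\textbf{Main obstacle.} The most delicate step is the joint-compatibility analysis: single-branch contractivity alone cuts the candidate space to a finite but still sizable list, and the elimination of the wrong pairings requires invoking Lemma~\ref{ref12} on carefully chosen short words. The invariance-of-face criterion should be the right abstract tool, sparing us explicit eigenvector computations for each candidate pair.
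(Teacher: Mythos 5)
Your combinatorial encoding and your single-branch rigidity step are sound, and they reproduce in permutation language exactly what the paper extracts from the incomplete incidence graphs $\Gcal_0^-,\Gcal_1^-$: each $\sigma_i$ must consist of one long cycle through the critical index plus at most one fixed point (the paper's dichotomy between the parabolic cases (0p),(1p) and the hyperbolic cases (0h),(1h)), and the degenerate case $i_0=1$ is excluded. Up to that point the two arguments coincide in substance.

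The gap is in your third step, which is where essentially all of the difficulty lives. After normalizing by $S_{n-1}$, the surviving freedom is (a) the position of $0$ inside the $(n+1)$-cycle of $\sigma_0$ in the hyperbolic case (and dually for $1$ in $\sigma_1$), and (b) the order $k_2,\ldots,k_n$ in which $\sigma_1$'s chain traverses $\{2,\ldots,n\}$ relative to $\sigma_0$'s. You assert that ``testing $v=01$ \dots should force a coherent-shift compatibility,'' but no such single test suffices, and the obstruction you propose (``non-simple dominant eigenvalue whose eigenspace meets $\Delta$'') is not the right criterion: the actual obstruction needs a matrix with a fixed vertex \emph{and} a second fixed point in $\Delta$, which can perfectly well come from a simple dominant eigenvalue (this is exactly what happens for the $A_{01}$ arising in the residual case of Lemma~\ref{ref15}). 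The paper's elimination requires the full strength of Lemma~\ref{ref14} (no two loops for the same word at distinct nodes; no two loops for \emph{incomparable} words at the same node, itself resting on Lemma~\ref{ref12} and hence on the Protasov--Voynov structure theory), applied to carefully built word pairs such as $0^s1^t$ versus $1^u0^r$, plus two explicit spectral/period computations: the incidence graph of $A_{01}$ (a loop of length $n$ shortcut to one of length $n/2$) to kill the last hyperbolic sub-case, and the disconnectedness of the incidence graph of $A_{10}$ to kill a single short backtrack in the $1$-chain. None of this is present in your outline, and you yourself flag it as the ``main obstacle'' without resolving it; as written, the proposal establishes the easy half of the classification and leaves the pinning-down of the three orbits unproved.
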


We assume familiarity with the basics of nonnegative matrices: reducible and irreducible matrices, their incidence graphs, the period of an irreducible matrix, the Perron-Frobenius theorem; see, for example, \cite[Chapter III]{gantmacher59} or~\cite[Chapter~2]{berman-plemmons79}.
The following lemma and its corollary Lemma~\ref{ref14} are our main tool.

\begin{lemma}
Let\label{ref12} $C,D\in\Sigma$ be such that:
\begin{itemize}
\item[(i)] there exists a vertex $e_i$ of $\Delta$ which is fixed by both $C$ and $D$;
\item[(ii)] the interiors of $C[\Delta]$ and $D[\Delta]$ do not intersect.
\end{itemize}
Then there exists a product $E$ of $C$ and $D$ such that $E^t[\Delta]$ does not shrink to a point, for $t\to\infty$.
\end{lemma}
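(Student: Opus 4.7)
First, I would set up coordinates so that $i=0$. The hypothesis $Ce_0\propto e_0$ (projectively), combined with $C\in\GL_{n+1}\Zbb$ having nonnegative entries, forces $Ce_0=e_0$: expanding the determinant along the first column shows that the single nonzero entry must equal $\pm 1$, hence $1$ by nonnegativity. The same applies to $D$, so both matrices are block upper triangular,
\[
C=\begin{pmatrix}1 & c^\top\\ 0 & C'\end{pmatrix},\qquad D=\begin{pmatrix}1 & d^\top\\ 0 & D'\end{pmatrix},
\]
with $C',D'\in\GL_n\Zbb$ nonnegative and $c,d\in\Zbb\p^n$; every product $E$ of $C,D$ inherits the same form, with lower block $E'$ equal to the corresponding product of $C',D'$. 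A radial-projection argument from $e_0$ shows that hypothesis (ii) forces the interiors of $C'[F_0]$ and $D'[F_0]$ to be disjoint in the opposite face $F_0=\set{x\in\Delta:x_0=0}$: a common interior point $z$ would lift to an open neighborhood in $\Delta$ near $e_0$ lying in both $C[\Delta]$ and $D[\Delta]$. Combined with the volume formula from the proof of Theorem~\ref{ref4}, this also excludes either $C'$ or $D'$ from being a permutation matrix, since a permutation would cover $F_0$ entirely and leave no room for the other simplex.

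The heart of the argument is to produce a word $E$ in $C,D$ with spectral radius $\rho(E')>1$. Granted this, a Perron--Frobenius lift yields a second fixed point of $E$ in $\Delta$: take a nonnegative eigenvector $v\in\Rbb\p^n\setminus\set{0}$ of $E'$ for the eigenvalue $\rho=\rho(E')$, write $E=\bigl(\begin{smallmatrix}1 & f^\top\\ 0 & E'\end{smallmatrix}\bigr)$, and set $\alpha=f^\top v/(\rho-1)\ge 0$. Then $E\bigl(\begin{smallmatrix}\alpha\\ v\end{smallmatrix}\bigr)=\rho\bigl(\begin{smallmatrix}\alpha\\ v\end{smallmatrix}\bigr)$, and projectivizing the vector $(\alpha,v)^\top$ gives a fixed point $p\in\Delta$ of $E$ distinct from $e_0$ (since $v\neq 0$). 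Both $e_0$ and $p$ are fixed by every $E^t$, so both lie in $E^t[\Delta]$ for all $t$; hence $\bigcap_t E^t[\Delta]$ contains at least these two points and cannot be a singleton.

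The main obstacle is precisely the existence of such an $E$. In low dimension the claim is immediate: for $n=2$, the constraints on $C',D'$ force them to be opposite-type elementary matrices $\bigl(\begin{smallmatrix}1 & b\\ 0 & 1\end{smallmatrix}\bigr)$ and $\bigl(\begin{smallmatrix}1 & 0\\ c & 1\end{smallmatrix}\bigr)$ with $bc\ge 1$, and a direct computation gives $\rho(C'D')=(2+bc+\sqrt{(bc+2)^2-4})/2>1$. For general $n$, the plan is to invoke the Ping-Pong Lemma~\cite[Lemma~2.5]{panti24}, whose hypotheses hold because $C'[F_0]$ and $D'[F_0]$ have disjoint interiors, to deduce that $\angles{C',D'}$ is a free sub-semigroup, and then extract a proximal element $E'$ from it in the style of~\cite[\S3.1]{benoist-quint16}. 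Since $\abs{\det E'}=1$ and $n\ge 2$, any proximal $E'$ automatically has $\rho(E')>1$: otherwise the dominant eigenvalue would have modulus at most $1$ and the strictly sub-dominant ones modulus $<1$, forcing $\abs{\det E'}<1$, a contradiction. Linking the disjoint-interior hypothesis to a spectrally expanding word is where the proof will need to work hardest.
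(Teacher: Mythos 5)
Your setup and reduction are sound and agree with the paper's: both $C$ and $D$ fix $e_0$ exactly, every word $E$ inherits the block form, and once some word satisfies $\rho(E')>1$ the eigenvector lift $(\alpha,v)$ gives a second fixed point of $E$ in $\Delta$, so $\bigcap_t E^t[\Delta]$ is not a singleton. The problem is that the one step you defer --- producing such an $E$ --- is the entire content of the lemma, and the route you sketch for it does not work. Freeness of the semigroup $\angles{C',D'}$ (via ping-pong) does not imply the existence of a proximal, or even spectrally expanding, element: there are free sub-semigroups of nonnegative matrices in $\GL_n\Zbb$ all of whose elements have spectral radius $1$, namely semigroups that are simultaneously block upper-triangular with permutation blocks on the diagonal (a permutation part composed with a unipotent part). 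The Benoist--Quint extraction of a proximal element requires strong irreducibility/proximality hypotheses on the generated group, and it is precisely these that can fail here; nothing in ``free'' rules out the degenerate triangular case. Your $n=2$ warm-up also overclaims: $C',D'$ are not forced to be opposite elementary matrices (both could be strictly positive with disjoint images), though in those cases the conclusion is trivial.

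The paper closes exactly this gap by arguing by contradiction with a structure theorem of Protasov--Voynov: if every element of the monoid generated by $C,D$ had spectral radius $1$, then after conjugation all elements are block upper-triangular with the diagonal blocks forming finite groups of permutation matrices; hence suitable powers $C^s,D^t$ are unipotent upper-triangular, and one can construct a positive vector $x$ with $D^{-t}C^sx$ also positive, exhibiting a common interior point of $C^s[\Delta]$ and $D^t[\Delta]$ --- contradicting hypothesis (ii). In other words, the disjoint-interiors hypothesis must be played a second time, directly against the ``all spectral radii equal $1$'' alternative, rather than being consumed once to establish freeness. Without some argument of this kind (or another mechanism converting (ii) into spectral growth), your proof is incomplete at its decisive step.
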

\begin{proof}
It is enough to prove that the monoid $\Gamma$ generated by $C$ and $D$ contains a matrix $E$ with spectral radius strictly greater than~$1$. Indeed, if so, then the spectral radius is an eigenvalue of $E$, corresponding to an eigenvector $v\in\Delta$. Since clearly $v\not= e_i$, we have that $E^t[\Delta]$ does not shrink to a point.

Assume by contradiction that all matrices in $\Gamma$ have spectral radius~$1$. By~\cite[\S6]{protasov-voynov17} we may conjugate by an appropriate permutation matrix and assume without loss of generality that all matrices in $\Gamma$ have block upper-triangular form
\[
\begin{pmatrix}
B_1 & * & *\\
&\ddots & *\\
& & B_r
\end{pmatrix},
\]
and, moreover, the monoids $\Gamma_1,\ldots,\Gamma_r$ whose elements are the blocks appearing on the diagonal in position, respectively, $1,\ldots,r$ are all irreducible by permutations (that is, none of them admits a proper nontrivial invariant coordinate subspace). 
Clearly, every matrix in every $\Gamma_i$ still has spectral radius~$1$. By~\cite[Proposition~12]{protasov-voynov17}, every $\Gamma_i$ is a finite semigroup, thus a group of permutation matrices.

This implies that appropriate powers $C^s, D^t$ of $C,D$ are upper-triangular, with~$1$ along the diagonal; let $T=D^{-t}C^s$, which has the same form. It is easy to see that there exists a positive (that is, all entries are strictly positive) vector $x$ such that $y=Tx$ is positive as well. Indeed, starting from any $x_n>0$, one simply chooses recursively $x_k$ so large that $x_k+T^k_{k+1}x_{k+1}+\cdots+T^k_n x_n>0$. 
Thus, the point $C^sx=D^ty$ belongs, once projected to $\Delta$, to the intersection of the interiors of $C^s[\Delta]$ and $D^t[\Delta]$, which is a subset of the intersection of the interiors of $C[\Delta]$ and $D[\Delta]$; this contradicts assumption~(ii).
\end{proof}

Let now $(A_0,A_1)$ be as in Claim~\ref{ref10}; we begin by examining~$A_0$. Every column of $A_0$ contains precisely one $1$, with the exception of one column, say column $h$, that contains two $1$ in the first two rows. We construct the \newword{incomplete incidence graph} $\Gcal_0^-$ of $A_0$, having nodes $0,1,\ldots,n$ and a directed edge $j\ot i$ whenever $j\not=h$ and the entry $(A_0)^i_j$ in row $i$ and column $j$ equals $1$. We thus have $A_0(e_j)=e_i$ (this seemingly unnatural direction of arrows is the common convention in the theory of graph-directed IFS).
The incomplete graph is \newword{completed} by adding the two edges $0\to h\ot 1$, thus obtaining the ordinary incidence graph $\Gcal_0$ of $A_0$; note that $A_0$ maps $e_h$ neither to $e_0$ nor to $e_1$, but to their Farey sum $(e_0+e_1)/2$.
We let $\Gcal_1^-$, $\Gcal_1$ be the analogously defined incomplete and complete incidence graphs of $A_1$; swapping the nodes $0$ and $1$, all considerations above apply.

We now glue together $\Gcal_0^-$ and $\Gcal_1^-$, forming the \newword{incomplete graph} $\Gcal^-$ of the pair $(A_0,A_1)$, with nodes $0,\ldots,n$ and \newword{$0$-edges} and \newword{$1$-edges}, coming from $\Gcal_0^-$ and $\Gcal_1^-$, respectively. For every word $v=a_0\ldots a_{t-1}\in\set{0,1}^t$ and every pair of nodes $i,j$ there is at most one path in $\Gcal^-$ that starts from $i$ and follows first an $a_0$-edge $i_1\ot i$, then an $a_1$-edge $i_2\ot i_1$ and so on, reaching $j$ after the final $a_{t-1}$-edge.
If such a path exists we say that $\occurs{i}{v}{j}$ \newword{occurs} in $\Gcal^-$. If $\occurs{i}{v}{i}$ occurs, then 
it is a \newword{loop for $v$ at $i$}. Words $v,w$ are \newword{incomparable} if neither of them is an initial segment of the other.

\begin{lemma}
Let\label{ref14} $\Gcal^-$ be the incomplete graph associated to the contractive pair $(A_0,A_1)$. Then $\Gcal^-$ cannot contain:
\begin{itemize}
\item[(i)] either two loops for the same word at distinct nodes;
\item[(ii)] or two loops for incomparable words at the same node.
\end{itemize}
\end{lemma}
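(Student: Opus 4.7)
The plan is to translate loops in $\Gcal^-$ into matrix fixed-point identities and then, under each of the hypotheses (i) and (ii), exhibit an infinite sequence $\abf$ along which the cylinder simplexes $\Delta_{\abf\restriction t}$ fail to shrink to a point, thereby contradicting contractivity. The core translation is as follows: a loop $\occurs{i}{v}{i}$ with $v=a_0\ldots a_{t-1}$ is a path $i=i_0,i_1,\ldots,i_t=i$ in which every edge $i_{k+1}\leftarrow i_k$ comes from a non-Farey column, so the defining relation $(A_{a_k})^{i_k}_{i_{k+1}}=1$ yields $A_{a_k}(e_{i_{k+1}})=e_{i_k}$; composing from right to left gives $A_v(e_i)=e_i$. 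Thus each loop in the \emph{incomplete} graph produces a vertex of $\Delta$ fixed by a product of $A_0,A_1$.

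For case (i), two loops for the same word $v$ at distinct nodes $i\neq i'$ give $A_v(e_i)=e_i$ and $A_v(e_{i'})=e_{i'}$, so $A_v^k[\Delta]\supseteq\set{e_i,e_{i'}}$ for every $k\ge0$. Setting $\abf=v^\omega$, one has $\Delta_{\abf\restriction k\abs{v}}=A_v^k[\Delta]$, and since $\Delta_{\abf\restriction t}$ is decreasing in $t$, the full intersection $\bigcap_{t}\Delta_{\abf\restriction t}$ contains the two-point set $\set{e_i,e_{i'}}$, contradicting Definition~\ref{ref3}.

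For case (ii), two loops for incomparable $v,w$ at $i$ give $A_v(e_i)=A_w(e_i)=e_i$, so hypothesis~(i) of Lemma~\ref{ref12} is met with $C=A_v,\ D=A_w$; it remains to verify that $\Delta_v=A_v[\Delta]$ and $\Delta_w=A_w[\Delta]$ have disjoint interiors. Incomparability supplies a first index $k$ at which $v$ and $w$ disagree, so writing $v=uav'$, $w=ubw'$ with $a\neq b$, one has $\Delta_v\subseteq\Delta_{ua}$ and $\Delta_w\subseteq\Delta_{ub}$; the simplex-splitting hypothesis, iterated via the fact that each $A_u$ is a homeomorphism of $\Delta$ onto $A_u[\Delta]$, yields pairwise disjoint interiors for the time-$(k+1)$ partition $\set{\Delta_z:\abs{z}=k+1}$, whence $\mathrm{int}(\Delta_v)\cap\mathrm{int}(\Delta_w)=\emptyset$. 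Lemma~\ref{ref12} then supplies a product $E=A_\omega$ of $A_v$ and $A_w$ (so $\omega$ is a finite word in $\set{0,1}$) such that $E^k[\Delta]$ does not shrink to a point, and $\abf=\omega^\omega$ again violates contractivity.

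The principal subtlety lies in verifying the disjoint-interiors hypothesis for Lemma~\ref{ref12} in case (ii): this is where the invertibility of elements of $\Sigma$ is needed to lift the time-$1$ simplex-splitting property to pairwise disjoint interiors of the time-$t$ cylinders. Once that is in place, both cases reduce to a routine identification between incomplete-graph loops, fixed points of matrix products, and failure of the contractivity condition.
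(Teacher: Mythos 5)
Your proof is correct and follows essentially the same route as the paper: case (i) is handled directly by exhibiting two fixed vertices of $A_v$ inside $\bigcap_t A_v^t[\Delta]$, and case (ii) by checking the two hypotheses of Lemma~\ref{ref12} for $C=A_v$, $D=A_w$ (the disjoint-interiors check via factoring out the longest common prefix is exactly the argument the paper sketches). Your version merely spells out more explicitly the translation from loops in $\Gcal^-$ to fixed vertices and the passage from a non-shrinking product $E$ to a periodic sequence violating Definition~\ref{ref3}.
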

\begin{proof}
If there are two loops, at $i$ and $j$, for the word $v$, then $A_v$ fixes both $e_i$ and $e_j$ which are hence both in $\bigcap_{t\ge0}A_v^t[\Delta]$; this contradicts contractivity.

Let $v$ and $w$ be incomparable. Then it is easy to see (for example by factoring out the longest common prefix) that the interiors of $A_v[\Delta]$ and $A_w[\Delta]$ do not intersect. On the other hand, if they determine loops in $\Gcal^-$ at the same node $i$, then both $A_v$ and $A_w$ fix the vertex $e_i$. By Lemma~\ref{ref12} this again contradicts contractivity.
\end{proof}

\begin{example}
As\label{ref17} a clarifying example we draw the incomplete graphs of the M\"onkemeyer matrix pairs; $0$-edges are continuous and $1$-edges dashed. We use $n=4$; larger values just require adding the obvious tail.
\begin{enumerate}
\item The \newword{parabolic-hyperbolic} pair $(M_0,M_1)$ has incomplete graph~$\Gcal^-$
\begin{figure}[h!]
\begin{tikzpicture}[scale=0.8]
\node (0) at (0,2)  []  {$0$};
\node (1) at (0,0) []  {$1$};
\node (2)  at (2,1) []  {$2$};
\node (3)  at (4,1) []  {$3$};
\node (4)  at (6,1) []  {$4$};
\path
(0) edge[0edge,out=150,in=-150,loop] (0)
(4) edge[0edge,bend right] (3)
(3) edge[0edge,bend right] (2)
(2) edge[0edge,bend right] (1)

(4) edge[1edge,bend left] (3)
(3) edge[1edge,bend left] (2)
(2) edge[1edge,bend left] (1)
(1) edge[1edge] (0);
\end{tikzpicture}
\end{figure}

\item The \newword{parabolic-parabolic} pair $(M_0,M_1F)$ has incomplete graph~$\Gcal^-$
\begin{figure}[h!]
\begin{tikzpicture}[scale=0.8]
\node (0) at (0,2)  []  {$0$};
\node (1) at (0,0) []  {$1$};
\node (2)  at (2,1) []  {$2$};
\node (3)  at (4,1) []  {$3$};
\node (4)  at (6,1) []  {$4$};
\path
(0) edge[0edge,out=150,in=-150,loop] (0)
(4) edge[0edge,bend right] (3)
(3) edge[0edge,bend right] (2)
(2) edge[0edge] (1)

(4) edge[1edge,bend left] (3)
(3) edge[1edge,bend left] (2)
(2) edge[1edge] (0)
(1) edge[1edge,out=-150,in=150,loop] (1);
\end{tikzpicture}
\end{figure}

\item The \newword{hyperbolic-hyperbolic} pair $(M_0F,M_1)$ has incomplete graph~$\Gcal^-$
\begin{figure}[h!]
\begin{tikzpicture}[scale=0.8]
\node (0) at (0,2)  []  {$0$};
\node (1) at (0,0) []  {$1$};
\node (2)  at (2,1) []  {$2$};
\node (3)  at (4,1) []  {$3$};
\node (4)  at (6,1) []  {$4$};
\path
(4) edge[0edge,bend right] (3)
(3) edge[0edge,bend right] (2)
(2) edge[0edge] (0)
(0) edge[0edge,bend right] (1)

(4) edge[1edge,bend left] (3)
(3) edge[1edge,bend left] (2)
(2) edge[1edge] (1)
(1) edge[1edge,bend right] (0);
\end{tikzpicture}
\end{figure}

\end{enumerate}

The adjectives parabolic/hyperbolic are justified. Indeed, $M_0$ (and its $F$-conjugate $M_1F$) has characteristic polynomial $(x-1)(x^n-1)$, and $M_0^t[\Delta]$ shrinks to the vertex~$e_0$. On the other hand, the completed graph of $M_1$ is given by the full loop $n\ot 0\ot 1\ot 2\ot\cdots\ot n$, which is shortcut by the further edge $n\ot 1$.
Therefore $M_1$ is irreducible of period~$\gcd(n+1,n)=1$, its powers are eventually strictly positive, and $M_1^t[\Delta]$ shrinks to a point in the interior of $\Delta$; moreover, its characteristic polynomial $x^{n+1}-x-1$ is irreducible~\cite[Theorem~1]{selmer56}.
\end{example}

As the $1$-dimensional case is trivial, from now on we assume $n\ge2$. By construction, 
every node in $\Gcal_0^-$ has precisely one father and one son, with the exception of~$h$ that has no father and of $1$ that has no son. 
We must have $h\not=1$, since otherwise $\Gcal_0^-$ would contain a loop with at least two nodes, or two one-node loops, contrary to Lemma~\ref{ref14}.
Therefore $\Gcal_0^-$
decomposes in a chain from $h$ to $1$ and at most one loop $i\ot i$; if so,
necessarily $i=0$, for otherwise the completed graph $\Gcal_0$ would be disconnected, and $A_0^t[\Delta]$ would not shrink to a point.
We relabel the nodes in $\set{2,3,\ldots,n}$ in such a way that every $i\ge3$ is an ancestor of $i-1$ in $\Gcal_0^-$. This amounts to replacing $(A_0,A_1)$ with $(HA_0H\m,HA_1H\m)$ ---which for simplicity we rename $(A_0,A_1)$--- for an appropriate permutation matrix $H\in S_{n-1}$.
Upon exchanging $0$ with $1$, the analysis above holds for $\Gcal_1^-$, except that the nodes $2,\ldots,n$ appear along the chain of $\Gcal_1^-$ in some fixed but up to now unknown order $k_2,\ldots,k_n$.

The presence or absence of an isolated one-node loop distinguishes the parabolic from the hyperbolic cases;
we summarize the situation thus far.

\noindent $\Gcal_0^-$ is:
\begin{itemize}
\item[(0p)] either the $n$-node chain $1\ot 2\ot\cdots\ot n$ plus the loop $0\ot 0$;
\item[(0h)] or an $(n+1)$-node chain $1\ot 2\ot \cdots\ot 0\ot \cdots\ot n$, with $0$ appearing in some up to now indeterminate position (but different from the last, which is taken by $1$).
\end{itemize}

\noindent $\Gcal_1^-$ is:
\begin{itemize}
\item[(1p)] either the $n$-node chain $0\ot k_2\ot\cdots\ot k_n$ plus the loop $1\ot 1$;
\item[(1h)] or an $(n+1)$-node chain $0\ot k_2\ot \cdots\ot 1\ot \cdots\ot k_n$, with $1$ appearing in some up to now indeterminate position (but different from the last, which is taken by $0$).
\end{itemize}

\begin{lemma}
If\label{ref15} we are in Case~(0h), then the chain $\Gcal_0^-$ ends with $1\ot 0\ot 2$.
Analogously, if we are in case  Case~(1h) the chain $\Gcal_1^-$ ends with $0\ot 1\ot k_2$.
\end{lemma}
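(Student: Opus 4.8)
\emph{Setup and reduction.} The target ``$\Gcal_0^-$ ends with $1\ot 0\ot 2$'' says precisely that the $0$-father of the sink node~$1$ is node~$0$: writing $A_0 e_1=e_q$, it asserts $q=0$, the companion relation $A_0 e_0=e_2$ being then forced by the relabeling that made $2,\dots,n$ increase towards the source $n=h$. Since $0$ sits somewhere on the chain, if $q\ne 0$ then the node just above the sink carries the least chain-label, $q=2$. So I would assume for contradiction that $A_0 e_1=e_2$; equivalently, $0$ occupies a chain position $j\ge 2$, in $\Gcal^-$ the $0$-edge out of node~$2$ lands on the sink~$1$, and the $0$-descent from~$0$ reaches~$1$ only after $j$ steps, $A_0^{\,j}e_1=e_0$. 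The one structural feature I would lean on is that, unlike in the parabolic case where node~$0$ is an absorbing $0$-loop, in Case~(0h) node~$0$ is \emph{interior} to the chain: it has a $0$-son $g=\phi_0(0)\ne 1$ and lies on a cycle of $\Gcal^-$.

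\emph{Main construction.} The plan is to manufacture two loops at node~$0$ forbidden by Lemma~\ref{ref14}(ii). Dually to the above, the sink of $\Gcal_1^-$ is node~$0$, so from \emph{every} node other than~$0$ the $1$-edges descend all the way down to~$0$. Combining the two descents: switching to the $1$-descent already at the node $g$ reached after the \emph{first} $0$-step yields a loop whose word is $0\,1^{s}$ (one checks $A_{0\,1^{s}}e_0=A_0 A_1^{s}e_0=A_0 e_g=e_0$), while continuing the $0$-descent one step further, to $\phi_0^{2}(0)$, before switching yields a loop whose word begins $0\,0\cdots$ (similarly $A_{0\,0\,1^{s'}}e_0=A_0^{2}e_{\phi_0^2(0)}=e_0$). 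These two words differ already in their second letter, so they are incomparable, and Lemma~\ref{ref14}(ii) contradicts contractivity. I would present this as the generic case: it requires only $A_0 e_1=e_2$ together with the existence of the relevant $1$-descents, which hold whenever the two branch nodes $g$ and $\phi_0^2(0)$ are both distinct from any node carrying a $1$-loop. For $j\ge 3$ both branch nodes lie strictly above the sink and the argument goes through verbatim, as it does for $j=2$ whenever $\Gcal_1^-$ is hyperbolic.

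\emph{The hard case.} The genuine obstacle is the single degenerate configuration where this branching collapses: $j=2$ with a \emph{parabolic} companion $\Gcal_1^-$ (a $1$-loop at node~$1$) and the one ordering $k_2,\dots,k_n$ for which no intermediate node returns directly to~$0$. Here the second branch point is the sink~$1$ itself, whose $1$-loop absorbs the $0$-descent and blocks the word $0\,0\cdots$, and one verifies that $\Gcal^-$ then contains no forbidden loop pair at all, so Lemma~\ref{ref14} is silent. In this case I would invoke Lemma~\ref{ref12} directly rather than its loop-theoretic corollary: the cyclic word $01$ gives a matrix $A_{01}$ whose relevant irreducible block is imprimitive, its spectral radius being attained by two eigenvalues of equal modulus with no parabolic Jordan block to override them, so $\Delta_{(01)^t}$ converges to a segment rather than a point—the exact failure of Example~\ref{ref5}. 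The delicate point is to make this spectral statement uniform in~$n$ and independent of the ordering of $\Gcal_1^-$; here the period $\gcd(n+1,j)$ of $A_0$, read off from its two completed cycles of lengths $n+1$ and $n-j+1$, is the natural auxiliary tool, killing at one stroke every $j$ sharing a factor with $n+1$ (via non-contractivity along $0^{\omega}$) and isolating the few residual primitive cases for the eigenvalue computation.
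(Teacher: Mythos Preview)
Your main construction is sound and close in spirit to the paper's first step: where the paper builds incomparable loops at an intermediate node $i$ (a $0$-descendant of $0$ lying on the $1$-chain), you anchor them at node $0$ itself via the words $01^s$ and $001^{s'}$. Both arguments dispose of $j\ge 3$ in either $\Gcal_1^-$ case, and of $j=2$ in Case~(1h), leaving exactly the same residual configuration: $j=2$ together with Case~(1p).

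Your treatment of that residual case, however, has a genuine gap. You assert that there is ``the one ordering $k_2,\dots,k_n$ for which no intermediate node returns directly to~$0$'' and that Lemma~\ref{ref14} is then silent, but you never show that all \emph{other} orderings are eliminated. The paper carries this reduction out explicitly: first $k_2=2$ is forced (otherwise the incomparable words $0^{r+1}1^s$ and $101^s$ loop at $k_2$), and then the $1$-chain $k_3\ot\cdots\ot k_n$ is shown to contain no backtracks (a short backtrack $i\to i+1$ yields two loops for the word $01$ at distinct nodes, a long one yields incomparable loops at $j$). Only after this reduction is $A_{01}$ a single concrete matrix whose incidence graph can be drawn: an $n$-cycle shortcut by an $(n/2)$-cycle, hence period $n/2>1$, giving a dominant eigenvalue strictly greater than~$1$ alongside the fixed vertex $e_0$, which blocks contractivity. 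Without pinning down the ordering, your spectral claim that ``the relevant irreducible block of $A_{01}$ is imprimitive'' floats free of any specific matrix and cannot be checked. Your period formula $\gcd(n+1,j)$ for $A_0$ is correct and does handle odd $n$ when $j=2$, matching the paper; but for even $n$ the $A_{01}$ analysis still requires the missing reduction.
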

\begin{proof}
Assume we are in Case~(0h); then there cannot be two nodes $i,j$, in the chain in $\Gcal_1^-$ and such that, in $\Gcal_0^-$, the node $0$ is an ancestor of $i$, and $i$ an ancestor of $j$. Indeed, if so, then $\occurs{0}{0^r}{i}$
and $\occurs{i}{0^s}{j}$ both occur in $\Gcal^-$. for certain $r,s\ge1$.
Moreover, there exist $t,u$ such that either $\occurs{j}{1^t}{i}$ and $\occurs{i}{1^u}{0}$ occur, or $\occurs{i}{1^t}{j}$ and $\occurs{j}{1^u}{0}$ occur. In the first case the incomparable words $0^s1^t$ and $1^u0^r$ are loops at $i$, and in the second so are the words $0^s1^u0^r$ and $1^t1^u0^r$, contradicting Lemma~\ref{ref14}(ii).
This establishes Lemma~\ref{ref15} whenever Case~(1h) holds, or Case~(1p) holds and $3$ is a descendant of~$0$ in $\Gcal_0^-$.

We have now to prove that the assumption of Cases~(0h) and (1p), and of the fact that the $0$-chain ends with $1\ot 2\ot 0$, leads to a contradiction.

If $k_2\not=2$, then for certain $r,s$ both $\occurs{{k_2}}{0^r}{0}$ and $\occurs{2}{1^s}{{k_2}}$ occur, creating the incomparable loops $0^r01^s$ and $101^s$ at $k_2$, which is impossible.

Finally, $k_2=2$ implies that $\set{k_3\ldots,k_n}$ equals $\set{3,\ldots,n}$ as sets. The $1$-chain $k_3\ot\cdots\ot k_n$ must run through this set of nodes in some order. However, this $1$-chain cannot contain any \newword{backtrack}, that is, any edge $i\to j$ with $i<j$.
Indeed:
\begin{itemize}
\item if $j=i+1$, this would create two loops for the word $01$, at the distinct nodes $0$ and $j$;
\item if $j>i+1$, then $i+1$ would be either a $1$-ancestor of $i$, or a $1$-descendant of $j$. In both cases, this would create two incomparable loops at $j$.
\end{itemize}
Thus, the absence of backtracks forces $\Gcal^-$ to appear as follows:
\begin{figure}[h!]
\begin{tikzpicture}[scale=0.8]
\node (1) at (0,0)  []  {$1$};
\node (2) at (2,0)  []  {$2$};
\node (0) at (4,0)  []  {$0$};
\node (3)  at (6,0) []  {$3$};
\node (4)  at (8,0) []  {$4$};
\path
(2) edge[0edge] (1)
(0) edge[0edge] (2)
(3) edge[0edge] (0)
(4) edge[0edge] (3)

(1) edge[1edge,out=150,in=-150,loop] (1)
(2) edge[1edge,bend right] (0)
(3) edge[1edge,bend right] (2)
(4) edge[1edge,bend right] (3);
\end{tikzpicture}
\end{figure}

\noindent As in Example~\ref{ref17}, continuous and dashed edges are $0$- and $1$-edges, respectively.
We set $n=4$ for concreteness: add the obvious tail for larger $n$, and remove nodes in excess and all edges related to them for $n=2,3$.
It is however an appropriate choice, for $n$ has to be even. Indeed, by completing $\Gcal_0^-$, that is adding the edges $0\to n\ot 1$, we obtain a loop with $n+1$ edges, which is shortcut by a loop of $n-1$ edges. 
Thus the period $\gcd(n+1,n-1)$ of the irreducible matrix $A_0$ is $1$ if and only if $n$ is even; if so, $A_0^t[\Delta]$ shrinks to a point in the interior of $\Delta$. Otherwise the period is~$2$, and $A_0^2$ has nontrivial block-diagonal form, preventing contractivity.

We complete our quest for a contradiction by showing that $A_{01}^t[\Delta]$ does not shrink to a point. If $n=2$ this is established by computing the right eigenspaces of
\[
A_{01}=\begin{pmatrix}
1 & & 1\\
 & & 1\\
 & 1 & 1
\end{pmatrix}.
\]
If $n=4,6,8,\ldots$ we see by direct inspection that the incidence graph of $A_{01}$ has the following form, with a loop of length $n$ shortcut to one of length $n/2$.
\begin{figure}[h!]
\begin{tikzpicture}[scale=1.1]
\node (0) at (-4,0)  []  {$0$};
\node (nm1) at  (-2,0)  []  {$n-1$};
\node (nm3) at (-1.5,-1.5)  []  {$n-3$};
\node (nm5)  at (0,-2) []  {$n-5$};
\node (3)  at (1.5,-1.5) []  {$3$};
\node (n) at (2,0)  []  {$n$};
\node (nm2) at (1.5,1.5)  []  {$n-2$};
\node (2) at (0,2)  []  {$2$};
\node (1)  at (-1.5,1.5) []  {$1$};
\path
(0) edge[0edge,out=150,in=-150,loop] (0)
(0) edge[0edge] (nm1)
(nm1) edge[0edge] (nm3)
(nm3) edge[0edge] (nm5)
(nm5) edge[0edge,densely dotted] (3)
(3) edge[0edge] (n)
(n) edge[0edge] (nm2)
(nm2) edge[0edge,densely dotted] (2)
(2) edge[0edge] (1)
(1) edge[0edge] (nm1)
(2) edge[0edge] (n);
\end{tikzpicture}
\end{figure}

\noindent The greatest common divisor of $n$ and $n/2$ is $n/2$. Thus $A_{01}^{n/2}$ has upper block-triangular form, with irreducible blocks of period $1$ along the diagonal. The first block has order $1$, and the remaining $n/2$ blocks order $2$. In particular, $A_{01}$ has a dominant eigenvalue strictly greater than $1$; since the vertex $e_0$ is fixed, the chain $A_{01}^t[\Delta]$ cannot shrink to a point. We have reached a contradiction and settled Case~(0h).

Of course Cases (0p) and (1p), as well as Cases (0h) and (1h), are exchanged by exchanging $0$ with $1$, and every $i\in\set{2,\ldots,n}$ with the corresponding $k_i$. Therefore the argument above applies to Case (1h) and establishes that $1$ must appear as  second-to-last node in $\Gcal_1^-$.
\end{proof}

The proof of Claim~\ref{ref10}, and thus of Theorem~\ref{ref6}, is almost complete. We have started from a contracting IFS $(A_0,A_1)$ such that $\Delta_0=N[\Delta]$ and $\Delta_1=L[\Delta]$, and have established that an appropriate conjugation reduces the pair of incomplete graphs $(\Gcal_0^-,\Gcal_1^-)$ (which determine the completed incidence graphs, and thus $A_0,A_1$ themselves) to one of the three possibilities (0p,1h), (0p,1p), (0h,1h) (the possibility (0h,1p) is equivalent to (0p,1h)). Moreover, we have proved that in the hyperbolic cases (0h) and (1h) the node chains are $1\ot 0\ot 2\ot 3\ot\cdots\ot n$ and $0\ot 1\ot k_2\ot k_3\ot\cdots\ot k_n$, respectively. 
Cases (0p) and (0h) correspond then to the M\"onkemeyer matrices $M_0$ and $M_0F$; if we establish that $k_i=i$ for every $i\in\set{2,\ldots,n}$, Cases (1p) and (1h) will correspond to $M_1$ and $M_1F$, thus settling Claim~\ref{ref10}.

As in the proof of Lemma~\ref{ref15}, no matter if we are in the parabolic or hyperbolic cases,
the $1$-chain $k_2\ot\cdots\ot k_n$ must run through the nodes $2,\ldots,n$ in some order. We claim that it does not contain backtracks. Indeed, if $i\to j$ were a long backtrack, that is with $i+1<j$,
then the argument in the proof of Lemma~\ref{ref15} would yield the existence of two incomparable loops at $j$, which is impossible. There cannot be two short backtracks $i\to i+1$ and $j\to j+1$, because this would create two loops for the word $10$ at $i$ and $j$, contrary to Lemma~\ref{ref14}(i). Finally, not even a single short backtrack $i\to i+1$ is possible. Indeed, direct inspection shows that, no matter if we are in the parabolic or hyperbolic cases, the incidence graph of $A_{10}$ would then be disconnected with an isolated loop at $i$, contrary to contractivity. 
Therefore there are no backtracks, and $k_i=i$ for every $i$ in $\set{2,\ldots,n}$.

\bibliography{bibliography}

\begin{thebibliography}{10}

\bibitem{avila-hubert-skripchenko16}
A.~Avila, P.~Hubert, and A.~Skripchenko.
\newblock On the {H}ausdorff dimension of the {R}auzy gasket.
\newblock {\em Bull. Soc. Math. France}, 144(3):539--568, 2016.

\bibitem{baldwin92a}
P.~R. Baldwin.
\newblock A multidimensional continued fraction and some of its statistical
  properties.
\newblock {\em J. Statist. Phys.}, 66(5-6):1463--1505, 1992.

\bibitem{banakh_et_al15}
T.~Banakh, W.~Kubi\'{s}, N.~Novosad, M.~Nowak, and F.~Strobin.
\newblock Contractive function systems, their attractors and metrization.
\newblock {\em Topol. Methods Nonlinear Anal.}, 46(2):1029--1066, 2015.

\bibitem{benoist-quint16}
Y.~Benoist and J.-F. Quint.
\newblock {\em Random walks on reductive groups}, volume~62 of {\em Ergebnisse
  der Mathematik und ihrer Grenzgebiete}.
\newblock Springer, 2016.

\bibitem{berman-plemmons79}
A.~Berman and R.~J. Plemmons.
\newblock {\em Nonnegative matrices in the mathematical sciences}, volume~9 of
  {\em Classics in Applied Mathematics}.
\newblock Society for Industrial and Applied Mathematics (SIAM), Philadelphia,
  PA, 1994.
\newblock Revised reprint of the 1979 original.

\bibitem{bishop-peres17}
C.~J. Bishop and Y.~Peres.
\newblock {\em Fractals in probability and analysis}, volume 162 of {\em
  Cambridge Studies in Advanced Mathematics}.
\newblock Cambridge University Press, 2017.

\bibitem{borsuk-ulam31}
K.~Borsuk and S.~Ulam.
\newblock On symmetric products of topological spaces.
\newblock {\em Bull. Amer. Math. Soc.}, 37(12):875--882, 1931.

\bibitem{bruin-troubetzkoy03}
H.~Bruin and S.~Troubetzkoy.
\newblock The {G}auss map on a class of interval translation mappings.
\newblock {\em Israel J. Math.}, 137:125--148, 2003.

\bibitem{delaharpe00}
P.~de~la Harpe.
\newblock {\em Topics in geometric group theory}.
\newblock University of Chicago Press, 2000.

\bibitem{federer69}
H.~Federer.
\newblock {\em Geometric measure theory}, volume 153 of {\em Die Grundlehren
  der mathematischen Wissenschaften}.
\newblock Springer, 1969.

\bibitem{fogg-nous24}
N.~P. Fogg and C.~No{\^u}s.
\newblock Symbolic coding of linear complexity for generic translations on the
  torus, using continued fractions.
\newblock {\em J. Mod. Dyn.}, 20:527--596, 2024.

\bibitem{fougeron-skripchenko21}
C.~Fougeron and A.~Skripchenko.
\newblock Simplicity of spectra for certain multidimensional continued fraction
  algorithms.
\newblock {\em Monatsh. Math.}, 194(4):767--787, 2021.

\bibitem{gantmacher59}
F.~R. Gantmacher.
\newblock {\em Applications of the theory of matrices}.
\newblock Interscience Publishers, Inc., New York, 1959.

\bibitem{garrity-osterman240000}
T~Garrity and O.~V. Osterman.
\newblock On the linear complexity associated with a family of multidimentional
  continued fraction algorithms.
\newblock \texttt{https://arxiv.org/abs/2410.02032}, 2024.

\bibitem{heersink16}
B.~Heersink.
\newblock An effective estimate for the {L}ebesgue measure of preimages of
  iterates of the {F}arey map.
\newblock {\em Adv. Math.}, 291:621--634, 2016.

\bibitem{isola11}
S.~Isola.
\newblock From infinite ergodic theory to number theory (and possibly back).
\newblock {\em Chaos Solitons Fractals}, 44(7):467--479, 2011.

\bibitem{ito89}
S.~Ito.
\newblock Algorithms with mediant convergents and their metrical theory.
\newblock {\em Osaka J. Math.}, 26(3):557--578, 1989.

\bibitem{jurgarauzy}
N.~Jurga.
\newblock Hausdorff dimension of the {R}auzy gasket.
\newblock \texttt{https://arxiv.org/abs/2312.04999}, 2023.

\bibitem{kessebohmerstratmann07}
M.~Kesseb\"{o}hmer and B.~O. Stratmann.
\newblock A multifractal analysis for {S}tern-{B}rocot intervals, continued
  fractions and {D}iophantine growth rates.
\newblock {\em J. Reine Angew. Math.}, 605:133--163, 2007.

\bibitem{monkemeyer54}
R.~M{\"o}nkemeyer.
\newblock \"{U}ber {F}areynetze in {$n$} {D}imensionen.
\newblock {\em Math. Nachr.}, 11:321--344, 1954.

\bibitem{nogueira95}
A.~Nogueira.
\newblock The three-dimensional {P}oincar\'{e} continued fraction algorithm.
\newblock {\em Israel J. Math.}, 90(1-3):373--401, 1995.

\bibitem{panti08}
G.~Panti.
\newblock Multidimensional continued fractions and a {M}inkowski function.
\newblock {\em Monatshefte f\"ur Mathematik}, 154:247--264, 2008.

\bibitem{panti24}
G.~Panti.
\newblock Purely periodic continued fractions and graph-directed iterated
  function systems.
\newblock {\em Ramanujan J.}, 65(1):447--475, 2024.

\bibitem{protasov-voynov17}
V.~Yu. Protasov and A.~S. Voynov.
\newblock Matrix semigroups with constant spectral radius.
\newblock {\em Linear Algebra Appl.}, 513:376--408, 2017.

\bibitem{schweiger00}
F.~Schweiger.
\newblock {\em Multidimensional continued fractions}.
\newblock Oxford University Press, 2000.

\bibitem{selmer56}
E.~S. Selmer.
\newblock On the irreducibility of certain trinomials.
\newblock {\em Math. Scand.}, 4:287--302, 1956.

\bibitem{selmer61}
E.~S. Selmer.
\newblock Continued fractions in several dimensions.
\newblock {\em Nordisk Nat. Tidskr.}, 9:37--43, 95, 1961.

\end{thebibliography}

\end{document}